\numberwithin{equation}{section}
\theoremstyle{plain}
\newtheorem{theorem}{Theorem}[section]
\newtheorem{definition}[theorem]{Definition}
\newtheorem{lemma}[theorem]{Lemma}
\newtheorem{prop}[theorem]{Proposition}
\newtheorem{remark}[theorem]{Remark}
\newcommand\R{\mathbb R} 
\renewcommand\S{\mathbb S} 
\renewcommand\H{\mathcal H}
\newcommand{\capacity}{\mathrm{cap}}
\newcommand{\supp}{{ \rm supp}}
\newcommand{\dist}{{ \rm dist}}
\newcommand\p{\partial}
\newcommand{\gstand}{ \ensuremath{\overset{\ensuremath{\mbox{\,}_\circ}}{g}}} 
\newcommand{\gflat}{ \ensuremath{g_E}} 
\newcommand{\nexp}{\ensuremath{{\frac2{n-2}}}} 
\newcommand\wh{\widehat}
\newcommand\wt{\widetilde}
\newcommand{\vol}{{\rm{ Vol}}}
\thanks{2000 Mathematics Subject Classification. Primary 53A30, 58J05, 31B15
\\ Keywords: Yamabe problem, conformal deformations, Wolff potential, Bessel capacity.}
\begin{document}

\title{Singular Yamabe problem for scalar flat metrics on the sphere}

\author{Aram L. Karakhanyan}
\address{School of Mathematics, The University of Edinburgh, Peter Tait Guthrie Road, EH9 3FD, Edinburgh, UK}
\email{ aram6k@gmail.com}
%
\maketitle

\begin{abstract}
Let $\Omega$ be a domain on the unit $n$-sphere $ \S^n$ and $\gstand$ the standard metric of $\S^n$, $n\ge 3$.
We show that  there exists a conformal metric $g$ with vanishing scalar curvature $R(g)=0$ 
such that $(\Omega, g)$  is complete  if and only if the Bessel capacity $\mathcal C_{\alpha, q}(\S^n\setminus \Omega)=0$, 
where $\alpha=1+\frac2n$ and $q=\frac n2$. 
Our analysis utilizes some well known properties of  capacity and Wolff potentials,  as well as  
a version of the Hopf-Rinow theorem for the divergent curves.
\end{abstract}
%

\section{Introduction}\label{sec-intro}
Let $\Omega$ be an open subset  of  the unit sphere $\S^n, n\ge 3$, and $\gstand$ the standard metric of 
$\S^n$ induced by the embedding $\S^n\hookrightarrow\R^{n+1}$. We want to characterize the open sets $\Omega$ with the following property: there exists a metric $g$, conformal to $\gstand$ such that $(\Omega, g)$ is complete and $g$ has vanishing scalar curvature. 
This question was studied by Schoen and Yau \cite{Sch-2}, \cite{SY-88}.
If we are given a compact Riemannian manifold $(M, g)$ then the problem of existence of a conformal deformation of 
$g$ into a complete metric $\bar g$ with constant scalar curvature is known as the Yamabe problem \cite{Yam}. Yamabe's original approach 
was to formulate the existence of $\bar g$ in terms of  a variational problem. Later contributions of Trudinger \cite{Neil}, Aubin \cite{Aubin} and Schoen \cite{Sch-1} helped to complete Yamabe's original approach.
  
One of the first results towards solving the Yamabe problem says that 
if there exists a complete $g$ conformal to $\gstand$ such that the scalar curvature $R(g)\ge 0$ 
then the Hausdorff dimension of $\p \Omega$ must be at most $\frac n2$,  \cite{SY-88}.
Under stronger
structural assumptions on $K:=\S^n\setminus\Omega$, one can show the converse statement. 
In particular, if $K=\S^n\setminus K$ is  a finite union of Lipschitz submanifolds of dimension $k\le (n-2)/2$ then 
there is a $g$, solving the Yamabe problem, such that $R(g)=0$,  see \cite{Del}, \cite{Kato-93}, \cite{McOwen-92}. 
See also \cite{Yam-16} for  the periodic setting with equator as singular set in $n\ge5$ sphere. 
Some discussion on this and related open problems is contained in \cite{McOwen-open}. For a survey of related recent results see \cite{Mar}, \cite{Denis} and the references therein.

Given a domain 
$\Omega\subset{\S}^n$. The scalar curvature of $R(g)$ of a metric $g$ after a conformal change $g=u^{\frac4{n-2}}\gstand$ is
\begin{equation}\label{Cozz}
R(g)=u^{-\frac{n+2}{n-2}}\left(-\frac{4(n-1)}{n-2}\Delta u +R(\gstand)u\right),
\end{equation}
where $\Delta u$ is the Laplace-Beltrami operator on the Riemannian manifold $(\S^n, \gstand)$. 
Suppose $g$ is scalar flat, that is, $R(g)=0$,  then 
from \eqref{Cozz} and the observation that $R(\gstand)=n(n-1)$ we arrive at the 
following problem:
\begin{eqnarray}
\label{PDE}
\frac{4(n-1)}{n-2}\Delta  u  -n(n-1)u =0&\ {\rm in}\ & \Omega,\nonumber\\
u>0 &\ {\rm in}\ &\Omega,\\
u^{4/(n-2)}\gstand \ {\rm is \ complete\ metric}&\ {\rm in}\ &\Omega.\nonumber
\end{eqnarray}
For the negative scalar case $R(g)=-1$ Labutin showed that a sufficient and necessary condition 
for the existence of $g$ is $\mathcal B_{2, \frac{n+2}4}(K)=0$, where $\mathcal B_{2, \frac{n+2}4}$ is 
the Bessel capacity for the Sobolev space
$W^{2, \frac{n+2}4}(\R^n)$, see \cite{Denis}.
His result can be seen as a way of measureing the thinness of $K$, which  comes from the classical potential theory, namely Wiener's test.
Recall that Wiener's theorem states that 
the Dirichlet problem 
\[
\left\{
\begin{array}{rcl}
\Delta w =0&{\rm in}&D, \\
w=f&{\rm on}&\partial D, 
\end{array}
\right.
\]
in a bounded domain 
$D\subset{\bf R}^n$,
$n\geq 3$,
is solvable for all
boundary data 
$f\in C(\partial D)$
if and only if
\begin{equation}\label{W-2}
\int_0^\delta
\frac{cap_2 (B(x, r)\setminus D)}{cap_2 (B(x, r))}
\,
\frac{dr}{r}
=+\infty \quad {\rm for \quad  any} \quad x\in\partial D.
\end{equation}
Here the upper limit $\delta>0$ is some fixed constant (e.g. one can take $\delta=1$), and $cap_2$ is the classical (electrostatic) capacity.

For $R(g)=0$, our goal  is to find a correct quantity measuring the thinness of $K$, similar to \eqref{W-2}, such that 
it gives a complete characterization of $K$ for which the existence of a solution to \eqref{PDE} follows. 
It turns out that Wolff's potential $\mathcal W^\mu_{\alpha, q}$ is the correct quantity in this sense
with $\alpha=1+\frac2n, q=\frac n2$,
 and hence the corresponding 
Bessel integral capacity  $\mathcal C_{1+\frac2n, \frac n2}$ is not the same as $\mathcal B_{\alpha, q}$.
These capacities, however, agree 
if $\alpha$ is an integer. 
In \cite{Denis} Labutin explicitly constructs a divergent curve (which are used to formulate a version of the Hopf-Rinow theorem for noncompact manifolds) to show that if $R(g)=-1$ and $K$ is not thin at some point then $g$ cannot be complete. This is the most 
technical part of \cite{Denis}.
However, for $R(g)=0$ the construction of a divergent curve simplifies if we   use 
the Schoen-Yau estimate of the Hausdorff dimension of $\partial \Omega$.

The aim of this work is to give a complete characterization of open set $\Omega$ without any assumption on the  structure of $K=\S^n\setminus \Omega$. 
In what follows $\capacity(\cdot) : =\mathcal C_{1+\frac2 n, \frac n2}(\cdot)$ stands for 
Bessel's capacity (see Section \ref{sec:2} for precise definitions). Our main result is  the following theorem: 

\begin{theorem}\label{th-A}
Let
$\Omega\subset \S^n, n\geq 3$,
be an open set and
$K={\S}^n \setminus \Omega$.
Then the following properties are equivalent:
\begin{itemize}
\item[(i)]
In 
$\Omega$
there exists a scalar flat complete metric conformal to $\gstand$.
\item[(ii)]
$\capacity(K)=0$.
\end{itemize}
\end{theorem}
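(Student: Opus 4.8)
The plan is to trade the geometric statement of Theorem~\ref{th-A} for the linear problem $Lu=0$, $u>0$ in $\Omega$, together with a completeness condition expressed through divergent curves; the link between the two is the identity ``radially integrated length element of a Green potential $=$ its Wolff potential'', which is exactly what singles out the exponents $\alpha=1+\tfrac2n$, $q=\tfrac n2$. Concretely, I would write $L:=-\tfrac{4(n-1)}{n-2}\Delta+n(n-1)$ for the conformal Laplacian of $\gstand$, so that by \eqref{Cozz} a metric $g=u^{4/(n-2)}\gstand$ on $\Omega$ is scalar flat iff $u$ is a positive $L$-harmonic function on $\Omega$. Since $n(n-1)>0$, $L$ is coercive and invertible on the compact manifold $\S^n$; I will use its positive symmetric Green function $G(x,y)$ (smooth for $x\ne y$, with $G(x,y)\simeq\dist(x,y)^{2-n}$ near the diagonal), the interior Harnack inequality for positive $L$-harmonic functions, and the Hopf--Rinow-type criterion advertised in the abstract: $(\Omega,g)$ is complete iff every divergent curve in $\Omega$ has infinite $g$-length $\ell_g(\gamma)=\int_\gamma u^{2/(n-2)}\,ds_{\gstand}$, and every such curve accumulates at a point of $K$. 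With the bookkeeping $n-\alpha q=\tfrac{n-2}2$ and $\tfrac1{q-1}=\tfrac2{n-2}$, the relevant Wolff potential is $\mathcal W^{\mu}_{\alpha,q}(x)=\int_0^1\mu(B(x,t))^{2/(n-2)}\,t^{-2}\,dt$.

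\smallskip\noindent\emph{Proof that (ii)$\Rightarrow$(i).} Assuming $\capacity(K)=0$, I would use the characterization of sets of zero capacity in terms of Wolff potentials (see Section~\ref{sec:2}) to produce a nonzero finite measure $\mu\ge0$ with $\supp\mu\subseteq K$ and $\mathcal W^{\mu}_{\alpha,q}\equiv+\infty$ on $K$, and then set $u:=G\mu$. This $u$ is positive, smooth and finite in $\Omega$, and $Lu=\mu$ is supported on $K$, so $Lu=0$ in $\Omega$ and $g:=u^{4/(n-2)}\gstand$ is scalar flat by \eqref{Cozz}. For completeness, take a divergent curve $\gamma$ accumulating at a point $x_0\in K$; it is elementary that $\gamma$ crosses every sufficiently fine dyadic annulus $A_j=\{2^{-j-1}\le\dist(\cdot,x_0)<2^{-j}\}$, and on $A_j$ one has $G(x,y)\gtrsim 2^{j(n-2)}$ for $y\in B(x_0,2^{-j-2})$, hence $u\gtrsim 2^{j(n-2)}\mu(B(x_0,2^{-j-2}))$ there. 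Integrating $u^{2/(n-2)}$ over the crossing, whose $\gstand$-length is $\gtrsim 2^{-j}$, contributes $\gtrsim 2^{j}\mu(B(x_0,2^{-j-2}))^{2/(n-2)}$, and summing over $j$ yields $\ell_g(\gamma)\gtrsim\mathcal W^{\mu}_{\alpha,q}(x_0)=+\infty$. Hence $g$ is complete.

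\smallskip\noindent\emph{Proof that (i)$\Rightarrow$(ii).} Given a scalar flat complete $g=u^{4/(n-2)}\gstand$, I would first invoke the Schoen--Yau estimate \cite{SY-88} (applicable since $R(g)=0\ge0$) to get $\dim_\H\partial\Omega\le n/2$; in particular $K$ has empty interior, so $K=\partial\Omega$ and, decisively, $\dim_\H K\le n/2<n-1$. Completeness forces $u\to+\infty$ at $\partial\Omega$: otherwise a Harnack chain pushed along a ray that avoids the low-dimensional set $K$ would give a divergent curve of finite $g$-length. Consequently $u$ extends to an $L$-superharmonic function on $\S^n$ which, the manifold being compact, admits a Riesz representation $u=G\mu$ with $\mu\ge0$ a \emph{finite} measure carried by $K$; this is the point at which the Schoen--Yau bound replaces, and considerably simplifies, the delicate divergent-curve construction of \cite{Denis}. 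Now suppose, for a contradiction, that $\capacity(K)>0$. Since $\mu$ is finite, the set $\{\mathcal W^{\mu}_{\alpha,q}=+\infty\}$ has zero capacity, so $\mathcal W^{\mu}_{\alpha,q}(x_0)<\infty$ for some $x_0\in K$. Because $\dim_\H K<n-1$, for a.e. direction $\theta$ the radial segment $\gamma_\theta(t)=x_0+t\theta$, $0<t\le r_0$, lies in $\Omega$; combining $u=G\mu\lesssim\int|\cdot-y|^{2-n}\,d\mu(y)$ with a Wolff-type inequality (averaged over $\theta\in\S^{n-1}$) gives $\int_0^{r_0}u(\gamma_\theta(t))^{2/(n-2)}\,dt<\infty$ for a.e. $\theta$. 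Any such $\gamma_\theta$ is then a divergent curve in $\Omega$ of finite $g$-length, contradicting completeness; therefore $\capacity(K)=0$.

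\smallskip\noindent\emph{Main obstacle.} The technical heart is the pair of curve constructions in the direction (i)$\Rightarrow$(ii): first, turning ``$u$ fails to blow up at some boundary point'' into a genuine finite-length divergent curve --- which also underpins the representation $u=G\mu$ with \emph{finite} total mass --- and second, the averaged Wolff-type inequality bounding $\int u^{2/(n-2)}$ along radial rays by $\mathcal W^{\mu}_{\alpha,q}(x_0)$, in which the near-diagonal part of $\int|\cdot-y|^{2-n}\,d\mu(y)$ has to be absorbed and the borderline exponent $\tfrac2{n-2}$ is $\le1$ only when $n\ge4$, so that $n=3$ needs a separate (bilinear) estimate. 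By contrast the matching lower bound $\ell_g(\gamma)\gtrsim\mathcal W^{\mu}_{\alpha,q}(x_0)$ used for (ii)$\Rightarrow$(i) is routine once the Green-function bounds and the annulus-crossing observation are in hand.
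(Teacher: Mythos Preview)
Your overall strategy matches the paper's: both directions use the divergent-curve form of Hopf--Rinow, the Wolff-potential characterization of zero-capacity sets, the dyadic annulus crossing bound for (ii)$\Rightarrow$(i), and for (i)$\Rightarrow$(ii) the Schoen--Yau dimension estimate to guarantee that a.e.\ radial segment from a chosen boundary point avoids $K$, followed by an averaged Wolff-type bound on $\int u^{2/(n-2)}$ along such segments (with the case split $n=3$ versus $n\ge4$ that you correctly anticipate). You work intrinsically on $\S^n$ with the Green function of $L$, whereas the paper passes to $\R^n$ by stereographic projection and uses the Newtonian kernel; this is cosmetic.

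The one substantive difference is how you obtain the representation $u=G\mu$. You argue: completeness forces $u\to+\infty$ on $\partial\Omega$, hence $u$ extends $L$-superharmonically across $K$, hence Riesz gives $u=G\mu$ with finite $\mu$. Both steps are gapped as written. The Harnack-chain argument you sketch does not prove $u\to+\infty$ at each boundary point: boundedness of $u$ along \emph{some} sequence approaching $x_0$ does not transfer, via Harnack, to boundedness along a fixed ray to $x_0$. And even granting $u\to+\infty$ on $K$, a superharmonic extension across $K$ requires $u\in L^1_{\rm loc}$, which you have not established and which is not automatic when the Hausdorff dimension of $K$ may be as large as $n/2\ge n-2$ for $n\le4$. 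The paper sidesteps this by invoking the Martin representation for positive harmonic functions on $\sigma(\Omega)$ directly: one gets $u(x)=\int k(x,y)\,d\mu(y)$ with $\mu$ a finite measure (normalized to probability), $\supp\mu\subset K$, and the pointwise bound $0\le k(x,y)\le C_M|x-y|^{2-n}$, with no preliminary blow-up or integrability analysis required. Once that representation is in hand, the remainder of your sketch coincides with the paper's argument.
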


The paper is organized as follows: 
Section \ref{sec:2}  contains some background material: first 
we use the steregraphic projection to reformulate the problem on $\R^n$. Then we introduce the Wolff potentials
and give a characterization of polar sets, i.e. sets of vanishing capacity,  see Proposition \ref{prop-Wolff}. We close the 
section by stating a non-standard version of the Hopf-Rinow theorem (for noncompact manifolds) formulated in terms of divergent curves, see Theorem \ref{thm:H-R}. This allows to link the finiteness of Wolff's potential with the completeness of the metric.

In Section \ref{sec:3} we show that the existence of a solution to scalar flat Yamabe problem implies that the 
capacity of $K$ is zero. A part of the argument is based on the representation of positive harmonic functions 
in terms of Martin kernels \cite{Armitage}, \cite{Helms}.
The crucial step in the proof is the estimate \eqref{u-est}. In the construction of the divergence curve 
we also use the Schoen-Yau estimate of the Hausdorff dimension  of $\p\Omega$.

Finally, in Section \ref{sec:4} we prove the implication $(ii)\Rightarrow(i)$ in Theorem \ref{th-A}.


\section{Background}\label{sec:2}
This section contains  some background results from conformal geometry and potential theory, see \cite{SchY} and \cite{Wolff}.
\subsection{Stereographic projection and reduction to $\R^n$}
Let $(M, g)$ be a Riemannian manifold of dimension $n\ge 3$.   
Let $R(g)$ be  the scalar curvature of the metric $g$
and $\Delta_g$ the Laplace-Beltrami operator. 
The operator 
\begin{equation}\label{eq-conf-lap}
\mathcal L_g=-4\frac{n-1}{n-2}\Delta_g u+R(g)
\end{equation}
 is called {\it conformal Laplacian}. 
 
It is well known that under conformal change of metric $\wh g=\phi^\frac 4{n-2}g, \phi\in C^\infty(M), \phi>0$,  we have 
 \begin{eqnarray}\label{eq-scal}
 R({\wh g})&=&\phi^{-\frac{n+2}{n-2}}\mathcal L_g\phi,\\
 \mathcal L_{\wh g} v&=&\phi^{-\frac{n+2}{n-2}}\mathcal L_g(\phi v).\label{eq-operator}
 \end{eqnarray}
Suppose $\wt M$ is  another manifold with metric $\wt g$, and let $f:M\to \wt M$ be a diffeomorphism changing the metric conformally. If $f^*\wt g=\wt g\circ f$ is the pull-back then 
 \begin{equation*}
 f^*\wt g=\phi^{\frac4{n-2}}g.
 \end{equation*}
Consequently, from \eqref{eq-scal} and \eqref{eq-operator} it follows that 
\begin{eqnarray}\label{eq-scal-1}
f^*(R_{\wt g})&=&\phi^{-\frac{n+2}{n-2}}\mathcal L_g\phi, \\\nonumber
f^*(\mathcal L_{\wt g}v)&=&\phi^{-\frac{n+2}{n-2}}\mathcal L_g(\phi f^*v).
\end{eqnarray}

Using  the stereographic projection $\sigma:\S^n\setminus\{N\}\to \R^n$, 
where $N$ is the north pole, we can rewrite the transformation equations on $\R^n$. Indeed, 
$\sigma$ is a diffeomorphism between  $(\S^n\setminus\{N\}, \gstand)$
and $(\R^n, \gflat)$,  because
\begin{eqnarray*}
(\sigma^{-1})^*\gstand &=&\left(\frac2{1+|x|^2}\right)^2 \gflat\\
&=& U^{\frac4{n-2}} \gflat, 
\end{eqnarray*}
where
\begin{eqnarray*}
U(x):=\left(\frac2{1+|x|^2}\right)^{\frac{n-2}2}\quad x\in \R^n.
\end{eqnarray*}
Since we consider the scalar flat case, i.e. $R({\wt g})=0$, then \eqref{eq-scal-1} yields
\begin{equation*}
{\mathcal L}_{\gstand} v=0, \quad v>0\  \ \hbox{in} \ \  \Omega.
\end{equation*}
Introduce the function 
\begin{eqnarray*}
u(x)&=&U(x)(\sigma^{-1})^*v(x)\\\nonumber
&=&U(x)v(\sigma^{-1}x), \quad x\in \R^n.
\end{eqnarray*}
Then from \eqref{eq-operator} we obtain 
\begin{equation*}
{\mathcal L}_{\gflat} v=0, \quad v>0\ \ \hbox{in}\ \ \sigma(\Omega)\subset \R^n.
\end{equation*}
Since $R({\gflat})=0$ then we get 
\begin{equation*}
\Delta_{\gflat }v=0, \quad v>0\ \ \hbox{in}\ \ \sigma(\Omega)\subset \R^n.
\end{equation*}

\subsection{Characterization of capacity}
For  $\alpha>0, 1<q\le \frac n{\alpha}$ we define the Bessel capacity of $E\subset \R^n$ as follows 
\[
\mathcal C_{\alpha, q}(E)=\inf\left\{\int_{\R^n}\psi^q ;\ \psi\ge 0, G_\alpha*\psi\ge 1\ \text{for all}\ x\in E\right\}, 
\]
where $G_\alpha$ is the Bessel kernel, best defined as the inverse Fourier transform of $(1+|\xi|^2)^{-\frac\alpha2}$, see \cite{Wolff}.
For given Radon measure $\mu$  the Wolff potentials are defined as follows 
\begin{equation}\label{eq:W-def}
\mathcal W_{\alpha, q}^\mu(x)=\int_0^1\left(\frac{\mu(B(x, \delta))}{\delta^{n-\alpha q}}\right)^{p-1}\frac{d\delta}{\delta},
\end{equation}
where $p+q=pq$.

An important fact is that $\mathcal W^\mu_{\alpha, q}$ bounds the nonlinear potential
 $\mathcal V_{\alpha, q}^\mu(x):=G_\alpha*(G_\alpha*\mu)^{p-1}$ from below (see \cite{Wolff}, page 164), i.e. there is a constant $A>0$ such that 
 \begin{equation}\label{eq:W-V}
\mathcal W_{\alpha, q}^\mu(x)\le A \mathcal V_{\alpha, q}^\mu(x).
\end{equation}

In what follows we take $\alpha=1+\frac2 n, q=\frac n2$ and denote 
the resulted capacity by $\capacity(\cdot)=\mathcal C_{1+\frac2n, \frac n2}(\cdot)$. Then for this choice of parameters the Wolff potential, which we denote by $\mathcal W^{\mu}$ for short, takes the following form 
\begin{equation*}
\mathcal W^\mu_{1+\frac2n, \frac n2}(x):= \mathcal W^\mu (x)=
\int_0^1
\left(
\frac{\mu\left( B(x,r)\right)}{r^{\frac{n-2}2}}
\right)^\nexp
\frac{dr}{r}.
\end{equation*}
The corresponding capacity on $\S^n$ is defined accordingly. By abuse of notation,  we continue to use the same notation $\capacity(\cdot)$.
The following characterization of vanishing capacity compacts is hard to find in the literature, so we give a proof for the reader's convenience. 
\begin{prop}
\label{prop-Wolff}
Let $K\subset\R^n$ be a compact set then $\capacity(K)=0$
if and only if  there exists a Radon measure $\mu$, $\|\mu\|=1$, such that $\supp \mu\subset K$ and
\begin{equation}
\label{Wolff_infty}
\mathcal W^\mu( x)=+\infty
\quad
{\rm for \quad all \quad }
x\in K.
\end{equation}
\end{prop}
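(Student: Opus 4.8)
The plan is to prove both implications by exploiting the standard duality/equilibrium theory for the nonlinear potential $\mathcal V^\mu_{\alpha,q}$ together with the comparison \eqref{eq:W-V}, and then to upgrade a ``finite in $L^q$-mean'' statement to the pointwise ``$=+\infty$ everywhere on $K$'' statement via an exhaustion and a Borel--Cantelli type summation trick. For the direction $\capacity(K)=0 \Rightarrow$ existence of $\mu$: suppose first that $\capacity(K)=0$. By the standard characterization of Bessel capacity one knows that $\capacity(K)=0$ forces $\capacity$ of a neighborhood to be small, and in particular for each $j$ there is an open $U_j\supset K$ with $\capacity(U_j)\le 4^{-j}$. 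Using the dual description of capacity one extracts, for each $j$, a probability measure $\mu_j$ with $\supp\mu_j\subset K$ whose nonlinear potential $\mathcal V^{\mu_j}_{\alpha,q}$ (equivalently, by \eqref{eq:W-V}, whose Wolff potential) is uniformly large on $K$; more precisely, the smallness of $\capacity(U_j)$ forces $\mathcal W^{\mu_j}\ge 2^j$ on $K$ except off a set that can be discarded. One then sets $\mu=\sum_j c_j \mu_j$ with $\sum c_j=1$ and $\sum c_j^{p-1}2^j=\infty$ chosen so that the series of Wolff potentials diverges at every point of $K$ (using subadditivity of $t\mapsto t^{p-1}$ if $p-1\le 1$, i.e.\ $q\ge 2$, which holds here since $q=n/2\ge 3/2$ — in fact $q=n/2\ge 3/2$, and we genuinely need $q\ge 2$, i.e.\ $n\ge 4$; the case $n=3$ needs a small separate remark using $\mathcal W^\mu\ge c\,\mathcal W^{\mu_j}$ on $\supp\mu_j$-type localization). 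The resulting $\mu$ has $\|\mu\|=1$, $\supp\mu\subset K$, and $\mathcal W^\mu=+\infty$ on $K$.

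For the converse, suppose such a $\mu$ exists with $\mathcal W^\mu\equiv+\infty$ on $K$ and $\supp\mu\subset K$. We argue by contradiction: if $\capacity(K)>0$, then there is a nontrivial equilibrium (capacitary) measure $\nu$ supported on $K$ whose Wolff potential $\mathcal W^\nu$ is \emph{bounded} $\nu$-a.e.\ on $K$ (this is the classical boundedness principle for Wolff potentials of the capacitary measure, cf.\ \cite{Wolff}). On the other hand, Wolff's inequality / the Wolff energy formula gives
\[
\int_K \mathcal W^\mu_{\alpha,q}\, d\nu \;\le\; C\Big(\int_K \mathcal W^\mu_{\alpha,q}\,d\mu\Big)^{1/q'}\Big(\int_K \mathcal W^\nu_{\alpha,q}\,d\nu\Big)^{1/q},
\]
or more directly one uses that $\int \mathcal W^\mu\,d\nu$ and $\int \mathcal W^\nu\,d\mu$ are comparable (a symmetric energy-pairing inequality for Wolff potentials). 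Since $\nu(K)>0$ and $\mathcal W^\mu=+\infty$ everywhere on $K$, the left side is $+\infty$, so $\int_K \mathcal W^\nu\,d\mu=+\infty$; but $\mathcal W^\nu$ is bounded on $\supp\nu\supset$ (the relevant part of $K$) and $\mu$ is finite — contradiction, provided we can ensure $\mu$ does not concentrate where $\mathcal W^\nu$ blows up. To make this clean I would instead quote the sharper fact that $\capacity(K)>0$ implies every measure $\mu$ with $\supp\mu\subset K$ and $\mu(K)>0$ satisfies $\mathcal W^\mu<\infty$ on a set of positive capacity (hence nonempty), directly contradicting \eqref{Wolff_infty}. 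This is exactly the Wolff-potential analogue of the classical fact that polar sets are precisely those carrying no measure of finite energy; I would cite \cite{Wolff}.

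The main obstacle I expect is the first direction — passing from the sequence of ``approximate'' measures $\mu_j$ (each making $\mathcal W^{\mu_j}$ large on most of $K$) to a single measure whose Wolff potential is \emph{genuinely infinite at every point}, since the Wolff potential is nonlinear in $\mu$ and one cannot simply add potentials. The device is to control, for $p-1\le 1$, the superadditivity defect of $t\mapsto t^{p-1}$ and to use that $K$ is compact (a finite subcover at each dyadic scale) so that the exceptional sets $E_j$ where $\mathcal W^{\mu_j}$ fails to be large have $\capacity(E_j)\to 0$ fast enough that $\bigcup_{k\ge j} E_k$ still has capacity $\to 0$; then a diagonal/Borel--Cantelli argument shows $\bigcap_j \bigcup_{k\ge j} E_k$ has capacity zero, and by hypothesis so does $K$, so in fact one does not even need to avoid an exceptional set — every point of $K$ lies in infinitely many ``good'' sets, forcing divergence of the combined series. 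Making the bookkeeping of constants $c_j$ consistent with $\|\mu\|=1$ and with $p-1=\tfrac{2}{n-2}$ is the routine-but-delicate part.
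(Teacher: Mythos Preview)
Both directions of your proposal have real gaps.

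\textbf{The implication $\mathcal W^\mu\equiv+\infty$ on $K\Rightarrow\capacity(K)=0$.} The ``symmetric energy-pairing inequality for Wolff potentials'' you invoke is not a standard fact, and the displayed inequality does not obviously hold; you then retreat to ``I would instead quote the sharper fact\ldots'', which amounts to citing the very statement you are trying to prove. The paper's argument here is a two-line reduction that bypasses equilibrium measures entirely: by \eqref{eq:W-V} one has $\mathcal V^\mu_{\alpha,q}=G_\alpha*f=+\infty$ on $K$, where $f=(G_\alpha*\mu)^{p-1}\in L^q(\R^n)$, and then one invokes Reshetnyak's characterization that $\mathcal C_{\alpha,q}(E)=0$ if and only if there exists a nonnegative $f\in L^q$ with $G_\alpha*f\equiv+\infty$ on $E$. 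No contradiction argument is needed.

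\textbf{The implication $\capacity(K)=0\Rightarrow$ existence of $\mu$.} Your key inequality points the wrong way. For $p-1\le 1$ subadditivity gives
\[
\Bigl(\sum_j c_j\,\mu_j(B)\Bigr)^{p-1}\le \sum_j c_j^{p-1}\,\mu_j(B)^{p-1},
\]
hence $\mathcal W^{\mu}\le \sum_j c_j^{p-1}\mathcal W^{\mu_j}$ for $\mu=\sum_j c_j\mu_j$ --- an \emph{upper} bound, useless for showing $\mathcal W^\mu=+\infty$. (Monotonicity alone yields $\mathcal W^\mu\ge c_j^{p-1}\mathcal W^{\mu_j}$ for each fixed $j$, so had you genuinely produced probability measures $\mu_j$ supported on $K$ with $\mathcal W^{\mu_j}\ge 2^j$ everywhere on $K$, you would only need $\sup_j c_j^{p-1}2^j=+\infty$, not a sum; but you never justify the existence of such $\mu_j$ beyond the phrase ``dual description of capacity''.) The paper avoids summation altogether: from the variational identity in Hedberg--Wolff, $\capacity(K)=0$ yields a minimizing sequence $\mu_j$ with $\mathcal W^{\mu_j}\ge 1$ on $K$ and $\mu_j(K)\to 0$; the \emph{rescaled} probability measures $\widehat\mu_j=\mu_j|_K/\mu_j(K)$ then satisfy $\mathcal W^{\widehat\mu_j}\ge \mu_j(K)^{-(p-1)}\to+\infty$ on $K$, and weak-$*$ compactness together with lower semicontinuity of $\mathcal W$ finishes. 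Scaling a single measure, not summing a sequence, is the device that handles the nonlinearity.
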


\begin{proof}
Suppose there is a measure $\mu$ such that $\supp \mu\subset K, \|\mu\|=1$ and 
$\mathcal W(\mu, x)=+\infty$ for all $x\in K$. From \eqref{eq:W-V}
it follows that $V_{\alpha, q}^\mu(x)=\infty$ for all $x\in K$. Note that 
$f=(G_\alpha*\mu)^{p-1}\in L^q(\R^n)$, see \cite{Wolff} page 163. By Theorem 1.4 \cite{Reshetnyak} $\capacity(E)=0$ if and only if there is a nonnegative $f\in L^q$ such that 
$G_\alpha*f=\infty$ for all $x\in E$. Since by definition 
$G_\alpha*f=\mathcal V_{\alpha, q}^\mu$, and thanks to \eqref{eq:W-V}, it follows that $\capacity(E)=0$. 

\smallskip 

Now we prove the converse statement. Suppose $\capacity(E)=0$. By Proposition 4 \cite{Wolff} we have 
\[
0=\inf_\mu \left\{ \int \mathcal W^\mu d\mu : \ \mathcal W^\mu\ge 1 \ \text{on}\ E\right\}.
\]
Let $\mu_j$ be a minimizing sequence such that $\int \mathcal W^{\mu_j}d\mu_j\to 0$
and $\|\mu_j\|>0$. Such sequence exists because $\mathcal W^\mu\ge 1$ on $E$, see \eqref{eq:W-def}. 
This inequality also implies that the restricted measures 
$\|\mu_j|_E\|\le \int \mathcal W^{\mu_j}d\mu_j\to 0$ have vanishing mass in the limit.
Introduce the unit mass measures $\widehat \mu_j=\mu_j|_{E}/\mu_j(E)$. 
Then from the scaling property of $W^\mu$ we get that 
\[
W^{\widehat \mu_j}_{\alpha, q}\ge \frac1{(\mu_j(E))^{p-1}}\to \infty\quad \text {on}\ E.
\]
Then the existence follows from a customary compactness argument for $\widehat \mu_j$ and the semicontinuity of the potential $\mathcal W^{\mu}$. 
\end{proof}

\subsection{Divergent curves and Hopf-Rinow theorem}
In order to characterize the completeness of the metric by Wolff's potential we state 
a version of the Hopf-Rinow theorem formulated in terms of divergent curves. 
\begin{definition}
A divergent curve in a Riemannian manifold $M$ is a differentiable mapping 
$c: [0,T)\to M$ such that for any compact set
$K\subset M$ there exists $t_0 \in(0, T)$ with $c(t) \not \in  K$ for all $t > t_0$.
\end{definition}
In other words, if $c$ is a divergent curve then it "escapes" every compact set in $M$. 
Define the
length of a divergent curve by
\begin{equation}\label{eq:curve-length}
L_g(c)=\lim_{t\to T}\int_0^t|c'(\tau)|d\tau.
\end{equation}
Then we have the following version of the Hopf-Rinow theorem.

\begin{theorem}\label{thm:H-R}
Let $M$ be a noncompact Riemannian manifold. Then $M$ is complete if and only if the length of any
divergent curve is unbounded.
\end{theorem}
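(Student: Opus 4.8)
The plan is to prove the two implications separately, using the classical Hopf--Rinow theorem (equivalence of metric completeness, geodesic completeness, and the Heine--Borel property that closed bounded sets are compact) as a black box, together with the observation that the natural distance function $d_g$ on a connected Riemannian manifold is complete as a metric space precisely when these conditions hold.

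\emph{($\Rightarrow$) If $M$ is complete, every divergent curve has infinite length.} Suppose $c:[0,T)\to M$ is divergent and, for contradiction, $L_g(c)=\lim_{t\to T}\int_0^t|c'(\tau)|\,d\tau=:L<\infty$. First I would fix a base point $p=c(0)$ and note that for every $t$, $d_g(p,c(t))\le\int_0^t|c'(\tau)|\,d\tau\le L$, so the image of $c$ lies in the closed metric ball $\overline{B}_g(p,L)$. By the Heine--Borel property (valid since $M$ is complete), this ball is compact; call it $K$. But $c$ is divergent, so there is $t_0$ with $c(t)\notin K$ for all $t>t_0$ --- contradicting $c(t)\in\overline B_g(p,L)=K$. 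Hence $L_g(c)=\infty$. (One should remark that finiteness of $L$ together with $d_g$-completeness actually forces $c$ to extend continuously to $t=T$ by a Cauchy argument, since $t\mapsto\int_0^t|c'|$ is monotone and bounded, so $\{c(t_n)\}$ is Cauchy for any $t_n\to T$; but the compactness argument above already suffices.)

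\emph{($\Leftarrow$) If every divergent curve has infinite length, $M$ is complete.} I would prove the contrapositive: if $M$ is not complete, I will build a divergent curve of finite length. By the negation of Heine--Borel there is a closed bounded set that is not compact; more usefully, failure of metric completeness gives a Cauchy sequence $\{x_k\}$ in $(M,d_g)$ with no limit in $M$. Passing to a subsequence I may assume $d_g(x_k,x_{k+1})<2^{-k}$. For each $k$ choose a piecewise-$C^1$ path $\gamma_k$ from $x_k$ to $x_{k+1}$ with length $<2^{-k+1}$ (possible by the definition of $d_g$ as an infimum of path lengths), then concatenate and reparametrize the $\gamma_k$ on consecutive subintervals of $[0,1)$ accumulating at $1$, obtaining $c:[0,1)\to M$ with $L_g(c)\le\sum_k 2^{-k+1}<\infty$. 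It remains to check $c$ is divergent. If not, some compact $K$ contains $c(t)$ for a sequence $t_n\to 1$; extracting a convergent subsequence $c(t_{n_j})\to y\in K$ and noting $d_g(c(t),y)\to 0$ along the full approach to $1$ (because the tail length $\int_t^1|c'|\to 0$ controls $d_g(c(t),c(t'))$), we get $x_k\to y$, contradicting that $\{x_k\}$ has no limit. One can smooth $c$ to be genuinely differentiable without changing length finiteness or divergence, if a $C^1$ (rather than piecewise-$C^1$) curve is demanded.

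\textbf{Main obstacle.} The essential content is entirely in the $(\Leftarrow)$ direction, and within it the delicate point is the interface between the \emph{topological} notion ``escapes every compact set'' and the \emph{metric} notion coming from $d_g$: one must be sure that a concatenation of short paths that is Cauchy in $d_g$ but non-convergent cannot secretly be captured in a compact set. This is exactly where one invokes that on a connected manifold $d_g$ metrizes the manifold topology, so compact sets are $d_g$-sequentially compact, closing the loop. Everything else --- reparametrization onto $[0,T)$, additivity of length, the bound $d_g(p,c(t))\le\int_0^t|c'|$ --- is routine. I would also state once at the outset that $M$ is assumed connected (otherwise ``complete'' should be read componentwise and ``noncompact'' per component), so that $d_g$ is a genuine metric inducing the topology.
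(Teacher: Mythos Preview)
Your argument is correct in both directions. The forward implication is essentially identical to the paper's: both of you invoke the Heine--Borel consequence of Hopf--Rinow to trap the image of a finite-length curve in a compact ball and contradict divergence.

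The backward implication is where you diverge from the paper. The paper argues via \emph{geodesic} incompleteness: it takes an inextendible geodesic $c:[0,T)\to M$, observes that constant speed forces $L_g(c)<\infty$, and then shows $c$ is divergent by a totally-normal-neighborhood argument (if $c(t_k)\to p\in K$, one can restart the geodesic from a point near $p$ and extend past $T$). You instead work directly with \emph{metric} incompleteness: a non-convergent Cauchy sequence, concatenation of short connecting arcs, and the tail-length estimate to verify divergence. Both are valid. The paper's route gives a genuinely smooth curve for free (a geodesic) and avoids any concatenation or smoothing, at the cost of invoking the local geodesic extension machinery; your route is more elementary in that it uses only the length-metric structure and the fact that $d_g$ induces the manifold topology, but you pay for it with the piecewise construction and the (admittedly routine) smoothing remark needed to match the paper's definition of a divergent curve as a differentiable map. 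Your observation that the key content lies in linking the topological notion ``escapes every compact'' to the metric notion via sequential compactness is exactly the same hinge the paper uses, just phrased for a concatenated path rather than a geodesic.
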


\begin{proof}
Suppose $M$ is complete then the classical Hopf-Rinow theorem holds. Consequently,   the closed ball 
$\overline{B(0, N)}$ is compact for every $N=1, 2, \dots$. 
Let $c:[0, T)\to  M$ be a divergent curve, then for every $N$ there is $t_N\in (0, T)$ such that $c(t_N)\not \in \overline{B(0, N})$. Therefore, from \eqref{eq:curve-length}
we get that 
\[
L_g(c|_{[0, t_N)})\ge N\to \infty,
\]
and, hence $c$ is a divergent curve. 

\smallskip

Now suppose that every divergent curve has infinite length. We want to show that then this implies that $M$ is complete. If $M$ is not complete then 
there is a geodesic $c : [0, T)\to M$ such that  $c$ cannot be extended further than $T$. 
Observe that $c$ has constant speed since
\[
c'g(c', c')=2g(\nabla_{c'}c', c')=0, 
\] 
where $\nabla$ is the Riemannian connection on $M$. Therefore $L_g(c)\le \theta T$ for some constant $\theta$. 
To finish the proof it is enough to show that $c$ is a divergent curve, hence $c$ cannot have finite length.
Thus,  suppose that $c$ is contained in some compact $K$. Let us take $t_k\in (0, T)$ so that $p_k:=c(t_k)\in K$ and $\lim_{k\to \infty}t_k=T$. We can extract a subsequence so that 
$c(t_{k_m})\to p$ for some $p\in K\subset M$. Let $W$ be a totally normal neighborhood of $p$, that is, $W$ is a normal neighborhood of all of its points. The existence of $W$ is well known,  see \cite{doCarmo}, page 72. 
Consequently, if $\epsilon>0$ is small so that $c(T-\epsilon)\in W$, then the geodesic joining 
$c(T-\epsilon)$ and $p$ can be continued further than $p$, which is a contradiction.
Thus $c$ escapes any compact and hence it is a divergent curve.
\end{proof}


\section{Proof of $(i)\Rightarrow(ii)$: existence of $u$ implies $cap(K)=0$}\label{sec:3}
We can use the stereographic projection and, thanks to the conformal homothety on $\R^n$, without loss of 
generality assume that the north pole $N\in \Omega$ such that $\sigma(K)\subset B(0, 1/2)$, 
and there is $u:\R^n\setminus \sigma(K)\to \R$ such that $g=u^{\frac 4{n-2}}\gstand$ is complete,  where  
\begin{equation*}
\Delta u =0, \quad \text{in}\ \  \sigma(\Omega)=\{u>0\}.
\end{equation*}
We claim that there exists a Radon measure $\mu$, 
with  $\supp \mu \subset K$, such that the following representation of $u$ is true
\begin{equation}
\label{Martin_representation}
u(x)=\int_{\R^n} k(x,y)d\mu(y)\quad \forall x\in B(0,3)\setminus K,
\end{equation}
where $k(x, y)$ is the Martin kernel (see \cite{Armitage} Theorem 8.4.1 or Chapter 12 \cite{Helms} p 251) and $k$ is locally integrable in 
$\R^n\times \R^n$. Moreover, there exists a  universal  constant $C_M$ such that 
\begin{equation}\label{anhav}
0\le k(x,y)\le \frac{C_M}{|x-y|^{n-2}}, 
\end{equation}
see \cite{Helms}, chapter 12.

Suppose $\capacity(K)>0.$ By Proposition \ref{prop-Wolff}
the Wolff potential of 
$\mu$
must be  finite at some point $x_0\in K$.
Without loss of generality we assume that $x_0=0$ and $\mu$ is a probability measure such that 
\begin{equation*}
\mathcal W^\mu(0)<+\infty, \quad 0\in K.
\end{equation*}

We first establish  a useful estimate.  
\begin{lemma}
Let $u>0$ be as above and $\supp \mu\subset B(0, 1/2)$. Then 
there is a constant $C>0$ such that 
\begin{equation}\label{u-est}
\int_{B(0, 1)}
\frac{\left(u(x)\right)^\nexp}{|x|^{n-1}}
\,
dx \le C \mathcal W^\mu( 0) .
\end{equation}

\end{lemma}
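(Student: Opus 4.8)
The plan is to combine the Martin representation \eqref{Martin_representation}--\eqref{anhav} with a spherical mean value computation: the left side of \eqref{u-est} will first be dominated by a weighted $L^{p-1}$ norm of a Newtonian potential of $\mu$, and the Wolff potential $\mathcal W^\mu(0)$ will then be read off by integrating over spheres centered at $0$. Concretely, \eqref{anhav} gives $u(x)\le C_M\,P(x)$ for $x\in B(0,1)\setminus K$, where $P(x):=\int_{\R^n}|x-y|^{-(n-2)}\,d\mu(y)$ is the Newtonian potential of $\mu$. Since $p-1=\nexp$, so that $(n-2)(p-1)=2$ and
\[
\mathcal W^\mu(0)=\int_0^1\frac{\mu(B(0,r))^{p-1}}{r^{2}}\,dr,
\]
the claim \eqref{u-est} reduces to $\int_{B(0,1)}|x|^{-(n-1)}P(x)^{p-1}\,dx\le C\,\mathcal W^\mu(0)$. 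Because $\|\mu\|=1$ and $\supp\mu\subset B(0,1/2)$, for $1/2<r\le1$ we have $\mu(B(0,r))=1$, hence $\mathcal W^\mu(0)\ge\log 2=:c_0>0$; this lets us absorb any additive constant and any contribution of $\{|y|\ge1/2\}$ into $\mathcal W^\mu(0)$ at the very end.

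Next I would pass to polar coordinates around $0$: with $x=\rho\omega$, $\rho\in(0,1)$, $\omega\in\S^{n-1}$, the Jacobian cancels the weight and
\[
\int_{B(0,1)}\frac{P(x)^{p-1}}{|x|^{n-1}}\,dx=\int_0^1\Big(\int_{\S^{n-1}}P(\rho\omega)^{p-1}\,d\omega\Big)\,d\rho.
\]
Since $y\mapsto|x-y|^{-(n-2)}$ is harmonic, the mean value property yields the exact identity $\int_{\S^{n-1}}|\rho\omega-y|^{-(n-2)}\,d\omega=|\S^{n-1}|\,\max(\rho,|y|)^{-(n-2)}$, and therefore $\int_{\S^{n-1}}P(\rho\omega)\,d\omega=|\S^{n-1}|\int\max(\rho,|y|)^{-(n-2)}\,d\mu(y)$. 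This last integral splits into the radial part $\rho^{-(n-2)}\mu(B(0,\rho))$ and the tail part $\int_{|y|>\rho}|y|^{-(n-2)}\,d\mu(y)$. Raising the radial part to the power $p-1$ and integrating in $\rho$ gives \emph{exactly} $\int_0^1\rho^{-2}\mu(B(0,\rho))^{p-1}\,d\rho=\mathcal W^\mu(0)$; the tail part, after a dyadic decomposition $[\rho,\infty)=\bigcup_{k\ge0}[2^k\rho,2^{k+1}\rho]$, the substitution $s=2^{k+1}\rho$, and summation of the resulting geometric series (using $\mu(B(0,s))=1$ for $s\ge1/2$ to cut off the divergent tail), is also $\le C\,\mathcal W^\mu(0)$. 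This Hardy-type estimate is essentially the mechanism that produces \eqref{u-est}.

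The remaining point---and the one I expect to be the main obstacle---is moving the exponent $p-1$ past the spherical average. When $n\ge4$ one has $p-1\le1$, so Jensen (or Hölder) on $\S^{n-1}$ gives $\int_{\S^{n-1}}P(\rho\omega)^{p-1}\,d\omega\le C\big(\int_{\S^{n-1}}P(\rho\omega)\,d\omega\big)^{p-1}$, and subadditivity $(a+b)^{p-1}\le a^{p-1}+b^{p-1}$ reduces everything to the two terms above. When $n=3$ we have $p-1=2>1$ and this step fails: Jensen now only gives the reverse inequality, and the obvious fix---using convexity to write $P(x)^2\le\int|x-y|^{-2}\,d\mu(y)$ and then bounding $\int_{B(0,1)}|x|^{-2}|x-y|^{-2}\,dx\le C|y|^{-1}$---is too lossy, since $\int|y|^{-1}\,d\mu(y)$ is \emph{not} controlled by $\mathcal W^\mu(0)$ (e.g. for $d\mu\sim|y|^{-s}\,dy$ with $2\le s<5/2$). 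One must instead keep the bilinear structure: expand $P(\rho\omega)^2$, apply Tonelli, and estimate the resulting kernel
\[
\int_{B(0,1)}\frac{dx}{|x|^{2}\,|x-y|\,|x-z|}\ \le\ \frac{C}{\max(|y|,|z|)}\Big(1+\log^+\tfrac{\max(|y|,|z|)}{\min(|y|,|z|)}\Big),
\]
the logarithm arising from the intermediate annulus where $|x|$ is between $\min(|y|,|z|)$ and $\max(|y|,|z|)$ in the natural splitting of $B(0,1)$ according to proximity to $0$, $y$, $z$, together with a far region. Feeding this into $\iint(\cdots)\,d\mu(y)\,d\mu(z)$, rewriting everything through $m(r):=\mu(B(0,r))$, integrating by parts in the Stieltjes integral, and invoking the classical one-dimensional Hardy inequality $\int_0^{1/2}r^{-2}\big(\int_0^r m(s)s^{-1}\,ds\big)^2\,dr\le4\int_0^{1/2}m(s)^2s^{-2}\,ds\le4\,\mathcal W^\mu(0)$ then yields $\le C\,\mathcal W^\mu(0)$ for $n=3$ as well. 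It is worth stressing that the averaging is genuinely needed: the pointwise bound $P(x)^{p-1}\le C\,\mathcal W^\mu(x)$ fails (for instance when $\mu$ behaves like surface measure on a curve through $x$), so the gain must come from integrating against the weight $|x|^{-(n-1)}$.
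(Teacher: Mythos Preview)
Your approach is correct and takes a genuinely different route from the paper.

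The paper decomposes $B(0,1)$ into dyadic annuli $D_m=\{2^{-(m+1)}<|x|<2^{-m}\}$ and, for $x\in D_m$, splits the Newtonian integral over $y$ into three zones (inner ball, comparable annulus, outer region). The inner and outer zones produce discrete sums of $\mu(B(0,\rho_k))/\rho_k^{n-2}$ that are controlled by a Hardy-type inequality (the paper's $I_3$); the comparable zone (the paper's $I_4$) is treated by separate ad hoc arguments for $n=3$ (a Gilbarg--Trudinger Riesz-potential estimate), $n=4$ (Fubini), and $n\ge5$ (H\"older). You instead pass to polar coordinates and use the exact spherical mean $\fint_{\S^{n-1}}|\rho\omega-y|^{-(n-2)}\,d\omega=\max(\rho,|y|)^{-(n-2)}$, which cleanly isolates the radial and tail parts; for $n\ge4$ a single Jensen step (valid because $p-1\le1$) and the substitution $s=2^{k+1}\rho$ finish the argument uniformly in $n$, avoiding the paper's three-way case split. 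For $n=3$ your bilinear kernel bound $\int_{B(0,1)}|x|^{-2}|x-y|^{-1}|x-z|^{-1}\,dx\lesssim \max(|y|,|z|)^{-1}(1+\log^+\tfrac{\max}{\min})$ is correct---the logarithm indeed comes from the intermediate annulus $\min(|y|,|z|)\lesssim|x|\lesssim\max(|y|,|z|)$---and the reduction to $\int_0^1 m(r)^2 r^{-2}\,dr$ via Stieltjes integration by parts plus the Hardy inequality $\int_0^{1/2} r^{-2}\bigl(\int_0^r m(s)s^{-1}\,ds\bigr)^2\,dr\le 4\int_0^{1/2} m(s)^2 s^{-2}\,ds$ is the same mechanism the paper uses for its $I_3$ (the paper writes the dual form $\int_0^1(\int_t^1 m(\tau)\tau^{-2}\,d\tau)^2\,dt\le 4\int_0^1 (m(t)/t)^2\,dt$). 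In short, both proofs hinge on the same one-dimensional Hardy inequality, but yours reaches it more directly for $n\ge4$ and through a different, pointwise-kernel route for $n=3$.
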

\begin{proof}
Denote $D_m=B(0, \rho_{m})\setminus B(0, \rho_{m+1}), \rho_m=2^{-m}$. We have 
\begin{eqnarray}\label{blya-11}
\int_{B(0,2)} \frac{u(x)^\nexp}{|x|^{n-1}} \,dx
&=&
\sum_{m=0}^\infty\int_{D_m}\frac{u(x)^\nexp}{|x|^{n-1}}\,dx\nonumber\\
&&
+
\int_{B(0,2)\setminus B(0,1)}\frac{u(x)^\nexp}{|x|^{n-1}}\,dx\nonumber
\\
&=&I_1+I_2.
\end{eqnarray}
Since $u=k*\mu$ then from  (\ref{anhav}) we see that 
\begin{equation*}
u\le C \quad{\rm in}\quad B(0, 2)\setminus B(0,1).
\end{equation*}
Hence
\begin{equation} \label{II}
I_2 \le C.
\end{equation}
As for  $I_1$ in (\ref{blya-11}), we have 
\begin{eqnarray*}
&&
\int _{D_m}\dfrac {1}{\left| x\right| ^{n-1}}\left( \int _{B(0, {2})}\dfrac {d\mu \left( y\right) }{\left| x-y\right| ^{n-2}}\right) ^{\frac {2}{n-2}}dx=\\
&&=\int _{D_m}\dfrac {1}{\left| x\right| ^{n-1}}
\left[\int\limits _{B\left( 0,\rho _{m+2}\right)}\dfrac {d\mu \left( y\right) }{\left| x-y\right| ^{n-2}}
+
\int \limits_{B\left( 0,\rho _{m-2}\right)\setminus B(0, \rho_{m+2})}\dfrac {d\mu \left( y\right) }{\left| x-y\right| ^{n-2}}
+ 
\int\limits _{B(0, 1)\setminus B\left( 0,\rho _{m-2}\right)}\dfrac {d\mu \left( y\right) }{\left| x-y\right| ^{n-2}}
\right]^{\frac 2{n-2}}dx.
\end{eqnarray*}
For $x\in D_{m}$ we have 
\[\int _{B\left( 0,\rho _{m+2}\right) }\dfrac {d\mu \left( y\right) }{\left| x-y\right| ^{n-2}}\leq \dfrac {1}{\rho ^{n-2}_{m+2}}\mu \left( B\left( 0,\rho_{m+1}\right) \right), \]
and
\[
\int _{B(0, 1)\backslash B\left( 0,\rho _{m-2}\right) }\dfrac {d\mu \left( y\right) }{\left| x-y\right| ^{n-2}}=\sum ^{m-3}_{k=0}\int _{D_k}\dfrac {d\mu \left( y\right) }{\left| x-y\right| ^{n-2}}.
\]
Noting that 
\[\dfrac {1}{2^{k+1}}-\dfrac {1}{2^{m}}=\dfrac {1}{2^{k+1}}\left( 1-\dfrac {1}{2^{m-k-1}}\right) \geq \dfrac {3}{4}\dfrac {1}{2^{k+1}},\]
we get 
\[\sum ^{m-3}_{k=0}\int _{D_k}\dfrac {d\mu \left( y\right) }{\left| x-y\right| ^{n-2}}\leq \left( \dfrac {8}{3}\right) ^{n-2}\sum ^{m-3}_{k=0}\dfrac {\mu \left( B\left( 0,\rho_{k}\right) \right) }{\rho^{n-2}_{k}}.\]
Combining, we obtain the estimate  
\begin{eqnarray}\nonumber
I_1
&\le& 
C \left( n\right) \sum ^{\infty }_{m=0}
\Bigg\{
\int \limits_{D_m}\dfrac {1}{\left| x\right| ^{n-1}}\left( \sum ^{m-3}_{k=0}\dfrac {\mu \left( B\left( 0,\rho_{k}\right) \right) }{\rho^{n-2}_{k}}\right) ^{\frac {2}{n-2}}
+
\left(\frac{\mu \left( B\left( 0,\rho_{m+1}\right) \right)}{\rho ^{n-2}_{m+2}} \right)^{\frac2{n-2}}\\\nonumber
&&+
   \left(
       \int \limits_{B\left( 0,\rho _{m-2}\right)\setminus B(0, \rho_{m+2})}\dfrac {d\mu \left( y\right) }{\left| x-y\right| ^{n-2}}
   \right)^{\frac 2{n-2}}
\Bigg\}\\\nonumber
&\le&
C \left( n\right) \sum ^{\infty }_{m=0}
\left\{\rho_m\left( \sum ^{m}_{k=0}\dfrac {\mu \left( B\left( 0,\rho_{k}\right) \right) }{\rho^{n-2}_{k}}\right) ^{\frac {2}{n-2}}
+\frac1{\rho_m^{n-1}} \int_{D_m}\left(\int \limits_{B\left( 0,\rho _{m-2}\right)\setminus B(0, \rho_{m+2})}\dfrac {d\mu \left( y\right) }{\left| x-y\right| ^{n-2}}\right)^{\frac 2{n-2}}dx
\right\}\\\nonumber
&=&
C( I_3+I_4).
\end{eqnarray}

For $n=3$ we take a sequence of smooth functions $f_i$ weakly converging to $\mu$ in 
$\wt {D_m}: =B\left( 0,\rho _{m-2}\right)\setminus B(0, \rho_{m+2})$ (see Lemma 0.2 \cite{Landkof}) then applying lemma 7.12 from \cite{GT} to 

\[V_sf_i\left( x\right) =\int _{\wt D_m}\left| x-y\right| ^{n\left( s-1\right) }f_i(y)dy\]
with $q=2,p=1, \delta =1-\dfrac {1}{q}=\dfrac {1}{2}$, and $s=\delta +\dfrac {1}{6}=\frac23$,  we get 

\[\int _{\wt D_m}\left| V_{2/3}f_{i}\right| ^{2}\leq C\left( \vol_{\gflat} (\wt D_{m})\right) ^{\frac {1}{3}}\left( \int _{\wt D_m}f_{i}\right) ^{2}.\]
After letting $i\to \infty$ this yields 
\[I_4\le \sum ^{\infty }_{m=0}\dfrac {1}{\rho _{m}}\left( \mu \left( B\left( 0,\rho _{m}\right) \right) \right) ^{2}\leq C\int ^{1}_{0}\dfrac {\left( \mu \left( B\left( 0,t\right) \right) \right) ^{2}}{t^{2}}dt.\]
Moreover, denoting $m(t)=\mu(B(0, t))$ and using integration by parts together with  Cauchy-Schwarz inequality we get 
\begin{eqnarray*}
\int ^{1}_{0}\left( \int ^{1}_{t}\dfrac {m\left( \tau \right) }{\tau ^{2}}d\tau \right) ^{2}dt
&=&
2\int ^{1}_{0}\dfrac {m\left( t\right) }{t}\left( \int ^{1}_{t}\dfrac {m\left( \tau \right) }{\tau ^{2}}d\tau \right) dt\\
&\le& 
2\left[\int ^{1}_{0}\left(\dfrac {m\left( t\right) }{t}\right)^2dt\int_0^1\left(\int ^{1}_{t}\dfrac {m\left( \tau \right) }{\tau ^{2}}d\tau \right)^2 dt\right]^{\frac12},
\end{eqnarray*}
implying that 
\[
\int ^{1}_{0}\left( \int ^{1}_{t}\dfrac {m\left( \tau \right) }{\tau ^{2}}d\tau \right)^{2}dt
\le
4\int ^{1}_{0}\left( \dfrac {m\left( t\right) }{t}\right)^2dt.
\]
Hence 
\[
I_3\le C\int ^{1}_{0}\left( \int ^{1}_{t}\dfrac {m\left( \tau \right) }{\tau ^{2}}d\tau \right)^{2}dt
\le
4C\int ^{1}_{0}\left( \dfrac {m\left( t\right) }{t}\right)^2dt=4C\mathcal W^\mu(0).
\]

If $n=4$  we have
\[
\int _{\wt {D_m}}d\mu \left( y\right) \int _{D_m}\dfrac {dx}{\left| x-y\right| ^{2}}\leq \mu \left( B\left( 0,\rho_{m}\right) \right) \rho ^{2}_{m},
\]
and then from Fubini's theorem we get, as above, the bound 
\[
I_4\le\sum ^{\infty }_{m=0}\dfrac {1}{\rho _{m}}\mu \left( B\left( 0,\rho _{m}\right) \right) \leq C\int ^{1}_{0}\dfrac {\mu \left( B\left( 0,t\right) \right)}{t^{2}}dt.\
\]
The estimate for $I_3$ follows from integration by parts.

Finally, let us consider the case $n\ge 5$. We have 
\begin{eqnarray*}
\int _{D_m}\left( \int_{\wt D_m} \dfrac {d\mu \left( y\right) }{\left| x-y\right| ^{n-2}}\right) ^{\frac {2}{n-2}}
&\leq& 
\left( \int _{D_m}\int_{\wt D_m} \dfrac {d\mu }{\left| x-y\right| ^{n-2}}\right)^\frac {2}{n-2}\left(\vol_{\gflat}( D_{m})\right) ^{1-\frac {2}{n-2}}\\
&\leq& 
C\left( \rho^{2}_{m}\mu(B(0, \rho_m)) \right) ^{\frac {2}{n-2}}\rho ^{n\left( 1-\frac {2}{n-2}\right) }_{m}\\
&=&
C\left(\mu(B(0, \rho_m)) \right) ^{\frac {2}{n-2}}\rho ^{n-2}_{m}.
\end{eqnarray*}
Thus 
\[
I_4\le  \sum ^{\infty }_{m=0}\dfrac {1}{\rho _{m}}\left( \mu \left( B\left( 0,\rho _{m}\right) \right) \right) ^{\frac2{n-2}}\leq C\int ^{1}_{0}\dfrac {\left( \mu \left( B\left( 0,t\right) \right) \right) ^{\frac2{n-2}}}{t^{2}}dt.
\]
As for $I_3$, one can easily see that 
\[
\left( \sum ^{m}_{k=0}\dfrac {\mu \left( B\left( 0,\rho_{k}\right) \right) }{\rho^{n-2}_{k}}\right) ^{\frac {2}{n-2}}
\le C\sum ^{m}_{k=0}\left( \dfrac {\mu \left( B\left( 0,\rho_{k}\right) \right) }{\rho^{n-2}_{k}}\right) ^{\frac {2}{n-2}},
\]
and consequently after integration by parts we get 
\begin{eqnarray*}
I_3\le C \int ^{1}_{0} \int ^{1}_{t}\left(\dfrac {m\left( \tau \right) }{\tau ^{n-2}} \right) ^{\frac2{n-2}}\frac{d\tau}\tau{dt}
\le C \int ^{1}_{0} \left(\dfrac {m\left( t \right) }{t ^{n-2}} \right) ^{\frac2{n-2}}{dt}.
\end{eqnarray*}
The proof of lemma is complete.
\end{proof}

{\it Proof of (i)$\Rightarrow$(ii)
in Theorem~\ref{th-A}.}
We claim that there exists a smooth curve $c:[0,1]\to B(0, 2)\setminus K$ such that 
\begin{eqnarray}
\label{fin}
&&
L_g(c)=\int_0^1(u(c(t)))^\nexp\left|{c'}(t)\right| dt<+\infty, \nonumber \\\label{bama}
&&
{\rm and\ }\gamma(t)\to 0\ {\rm as}\ t\to 1.
\end{eqnarray}
Observe that 
(\ref{fin})
is impossible if 
$u^{4/(n-2)}\gflat$ is complete, thanks to Theorem \ref{thm:H-R}.
Hence to finish the proof we have to establish (\ref{fin}).
So it's left to show the existence of such curve.
Take 
$\omega\in\partial B$
and let  $\ell(\omega)$ be the interval 
\begin{equation*}
\ell(\omega)=
\left\{x\in {\bf R}^n:\quad x=s\omega, \quad0<s\leq 1 \right\},
\end{equation*}
where $\pi : x\to  \frac x{|x|}$ is the projection on $\partial B$.
Put 
\begin{equation*}
\Xi=
\partial B \setminus \pi(K \setminus\{0\}), 
\end{equation*}
and observe that 
\begin{equation*}
\pi^{-1}( \Xi ) \subset B\setminus K.
\end{equation*}
We claim that $\H^{n-1}(\Xi)>0$, otherwise this means that $\H^{n-1}(\p \Omega)>0$
But this will be in contradiction with the Schoen-Yau estimate of the 
Hausdorff dimension of $\p\Omega$ which can be at most $\frac n2$, see \cite{SY-88}, Theorem 2.7.  

Switching to polar coordinates 
$(r,\omega)$, $r>0$, $\omega\in\partial B$, we get  from \eqref{u-est} that
\begin{eqnarray*}
+\infty
&>&
\int_{\pi^{-1}( \Xi ) } u(x)^\nexp\,\frac{1}{|x|^{n-1}}\, dx\\
&=&
\int_{\Xi}\int_0^1 u(x(r,\omega))^\nexp\,\frac{1}{r^{n-1}}\,r^{n-1}\, dr\, d\H^{n-1}(\omega)\\
&=&
\int_{\Xi}\left(\int_{\ell(\omega)}u^\nexp\, ds\right)\, d\H^{n-1}(\omega).
\end{eqnarray*}
Consequently
\begin{equation*}
\int_{\ell(\omega_0)}u^\nexp
\, ds
<+\infty
\quad{\rm for\quad some\quad}
\omega_0 \in \Xi.
\end{equation*}
By our definitions
\begin{equation*}
\ell(\omega_0)\cap K =\emptyset,
\end{equation*}
and we conclude that  \eqref{bama} holds for the curve $\gamma=\ell(\omega_0)$. This finishes the proof. 


\section{Proof of $(ii)\Rightarrow (i)$: $\capacity(K)=0$ implies existence of metric}\label{sec:4}

{\it Proof of (ii)$\Rightarrow$(i)
in Theorem~\ref{th-A}.}
In what follows we assume, without loss of generality, that the north pole $N\in\Omega$. 
Since $\sigma(K)\subset\R^n$ is the image of $K$ under stereographic projection then it is compact
such that
\begin{equation*}
\capacity(\sigma(K))=0.
\end{equation*}
From Proposition \ref{prop-Wolff}
it follows that there is a probability measure $\mu$,
such that $\supp\mu\subset \sigma(K)$ and
\begin{equation}\label{Wolff-infty}
\mathcal W^\mu( x)=+\infty\quad {\rm for \quad all \quad }x\in \sigma(K).
\end{equation}
The convolution 
\begin{equation*}
u(x) = \int\frac{d\mu(y)}{|x-y|^{n-2}}
\end{equation*}
solves 
\begin{equation*}
\Delta  u   =0, \quad  u>0\quad{\rm in}\quad \R^n\setminus \sigma(K).
\end{equation*}
To finish the proof we have to show that 
\begin{eqnarray}\label{blya}
\sigma^* 
\left( u^{\frac4{n-2}}\gflat \right) \ {\rm is \ a \ complete\ metric}\ {\rm in}\ \Omega.
\end{eqnarray}
Since $u$ is harmonic in $\{u>0\}$, and $u(\sigma(N))>0$ then $\sigma^*(u^{4/(n-2)})\gflat$ gives a metric on $\S^n$ which is smooth at $N$.
To check \eqref{blya},  we use a version of the Hopf-Rinow 
theorem formulated in terms of divergent curves, see Theorem \ref{thm:H-R}. 
Let us take a divergent curve $c$  in $\Omega$, and denote $\wt c : [0,+\infty)\to \R^n\setminus \sigma(K)$  its stereographic projection.
Clearly $c$ is divergent curve in 
$\R^n\setminus \sigma(K)$. Since by assumption $N\in \Omega$ then $\wt c$ is contained in some ball in $\R^n$.
Recall the arc length formula 
\begin{equation}\label{length_infty}
L_{g}(c)=\int_0^\infty \sqrt{g(c'(t), c'(t))}dt=
\int_0^\infty
u(\wt c(t))^{\frac 2{n-2}}\,\left|{\wt c\,{}'}(t)\right|\,dt,
\end{equation}
where $g=u^{\frac4{n-2}}\gflat.$

By assumption  $c$ (and hence $\wt c$)  is a  divergent curve,  therefore there exists $x_0\in K$ such that
\begin{equation}\label{blya-2}
\dist_{\gstand}(x_0, c(t_k))
\to 0
\quad
{\rm for\quad a \quad sequence}
\quad
\{t_k\},
\quad
k\to+\infty.
\end{equation}
For
$m\in \mathbb N$, we let
$\gamma_m=D_m\cap \wt c$,  
where 
\begin{equation*}
D_m=\left\{x\in {\R}^n:\frac1{2^m}<|x-\wt x_0|<\frac1{2^{m-1}}\right\}.
\end{equation*}

If $m\geq m_0$, for sufficiently large $m_0$,  it follows from the smoothness of
$c$ that $\gamma_m$ is at most a countable union of open smooth curves. 
Moreover, from \eqref{blya-2} we see that  $\gamma_m\ne\emptyset$, and 
\begin{equation*}
L_{\gflat}(\gamma_m)\geq \frac1{2^{m-1}}-\frac1{2^{m}}=\frac1{2^{m}}
\quad{\rm for \quad all } \quad m \geq m_0.
\end{equation*}
For $y\in B(\wt x_0, 2^{-(k+2)})$ and $x\in D_k$ we have $|x-y|\le \frac1{2^k}+\frac1{2^{k+2}}=\frac5{2^{k+2}}$. Therefore
\begin{eqnarray}\label{inf-blya}\nonumber
u(x)
&
\geq
&
\int_{B(\wt x_0, \rho_{k+2})}\frac{d\mu(y)}{|x-y|^{n-2}}
\\
&
\ge
&
\frac1{5^{n-2}}\frac{\mu\left( B(\wt x_0, \rho_{k+2})\right)}{\rho_{k+2}^{n-2}}
\quad {\rm for\ all}\ x\in D_k,
\end{eqnarray}
where we set $\rho_i=2^{-i}.$
Let $I_k\subset (0,+\infty)$ denote the open set such that
\begin{equation*}
\wt c : I_k\to \sigma(\Omega)\cap D_k.
\end{equation*}
Then we derive that
\begin{eqnarray*}
L_g(c)
&=&\int_0^\infty
u(\wt c(t))^{\frac 2{n-2}}\,\left|{\wt c\,{}'}(t)\right|\,dt\\
&=&\sum_{k=0}^\infty \int_{I_k}
u(\wt c(t))^{\frac 2{n-2}}\,\left|{\wt c\,{}'}(t)\right|\,dt\\
&\ge&
\sum_{k=0}^\infty
\left(
\inf_{D_k} u \right)^{\frac2{n-2}}L_{\gflat}(\gamma_k)\\
&\ge&
\frac2{5^{n-2}}\sum_{k=0}^\infty
\left(\frac{\mu\left( B(\wt x_0, \rho_{k+2})\right)}{\rho_{k+2}^{n-2}}\right)^{\frac2{n-2}}
\rho_k \qquad {\text{after using} \ } \eqref{inf-blya}.
\\
\end{eqnarray*}
Recalling the definition of $\mathcal W^\mu$ we see that 
\begin{eqnarray*}
\mathcal W^\mu(x)&=&\int_0^1 \left(
\frac{\mu\left( B(x,r)\right)}{r^{n-2}}\right)^\nexp dr
=\sum_{k=0}^\infty\int_{2^{-(k+1)}}^{2^{-k}} \left(
\frac{\mu\left( B(x,r)\right)}{r^{n-2}}
\right)^\nexp dr\\
&\le& 4\sum_{k=0}^\infty \left(
\frac{\mu\left( B(x, \rho_k)\right)}{\rho_k^{n-2}}
\right)^\nexp \rho_k. \\
\end{eqnarray*}
Comparing the inequalities for $L_g(c)$ and $\mathcal W^\mu$ and recalling (\ref{Wolff-infty})
\begin{equation*}
\mathcal W^\mu(\wt x_0)=+\infty, 
\end{equation*}
we obtain that $\widetilde c$ (and hence $c$) has infinite length, and thus Theorem \ref{thm:H-R} implies that 
\eqref{blya} is true.
\qed

\begin{bibdiv}
\begin{biblist}
\bib{Armitage}{book}{
   author={Armitage, David H.},
   author={Gardiner, Stephen J.},
   title={Classical potential theory},
   series={Springer Monographs in Mathematics},
   publisher={Springer-Verlag London, Ltd., London},
   date={2001},
   pages={xvi+333},
   isbn={1-85233-618-8},
   review={\MR{1801253}},
   doi={10.1007/978-1-4471-0233-5},
}

\bib{Aubin}{article}{
   author={Aubin, Thierry},
   title={\'{E}quations diff\'{e}rentielles non lin\'{e}aires et probl\`eme de Yamabe
   concernant la courbure scalaire},
   journal={J. Math. Pures Appl. (9)},
   volume={55},
   date={1976},
   number={3},
   pages={269--296},
   issn={0021-7824},
   review={\MR{0431287}},
}

\bib{Yam-16}{article}{
   author={Bettiol, Renato G.},
   author={Piccione, Paolo},
   author={Santoro, Bianca},
   title={Bifurcation of periodic solutions to the singular Yamabe problem
   on spheres},
   journal={J. Differential Geom.},
   volume={103},
   date={2016},
   number={2},
   pages={191--205},
   issn={0022-040X},
   review={\MR{3504948}},
}

\bib{Del}{article}{
   author={Delano\"{e}, Philippe},
   title={Generalized stereographic projections with prescribed scalar
   curvature},
   conference={
      title={Geometry and nonlinear partial differential equations},
      address={Fayetteville, AR},
      date={1990},
   },
   book={
      series={Contemp. Math.},
      volume={127},
      publisher={Amer. Math. Soc., Providence, RI},
   },
   date={1992},
   pages={17--25},
   review={\MR{1155406}},
   doi={10.1090/conm/127/1155406},
}

\bib{doCarmo}{book}{
   author={do Carmo, Manfredo Perdig\~{a}o},
   title={Riemannian geometry},
   series={Mathematics: Theory \& Applications},
   note={Translated from the second Portuguese edition by Francis Flaherty},
   publisher={Birkh\"{a}user Boston, Inc., Boston, MA},
   date={1992},
   pages={xiv+300},
   isbn={0-8176-3490-8},
   review={\MR{1138207}},
   doi={10.1007/978-1-4757-2201-7},
}

\bib{Wolff}{article}{
   author={Hedberg, L. I.},
   author={Wolff, Th. H.},
   title={Thin sets in nonlinear potential theory},
   journal={Ann. Inst. Fourier (Grenoble)},
   volume={33},
   date={1983},
   number={4},
   pages={161--187},
   issn={0373-0956},
   review={\MR{727526}},
}

\bib{Helms}{book}{
   author={Helms, L. L.},
   title={Introduction to potential theory},
   series={Pure and Applied Mathematics, Vol. XXII},
   publisher={Wiley-Interscience A Division of John Wiley \& Sons, New
   York-London-Sydney},
   date={1969},
   pages={ix+282},
   review={\MR{0261018}},
}

\bib{Mar}{article}{
   author={Gonz\'{a}lez, Mar\'{\i}a del Mar},
   title={Recent progress on the fractional Laplacian in conformal geometry},
   conference={
      title={Recent developments in nonlocal theory},
   },
   book={
      publisher={De Gruyter, Berlin},
   },
   date={2018},
   pages={236--273},
   review={\MR{3824214}},
}

\bib{GT}{book}{
   author={Gilbarg, David},
   author={Trudinger, Neil S.},
   title={Elliptic partial differential equations of second order},
   series={Classics in Mathematics},
   note={Reprint of the 1998 edition},
   publisher={Springer-Verlag, Berlin},
   date={2001},
   pages={xiv+517},
   isbn={3-540-41160-7},
   review={\MR{1814364}},
}

\bib{Kato-93}{article}{
   author={Kato, Shin},
   author={Nayatani, Shin},
   title={Complete conformal metrics with prescribed scalar curvature on
   subdomains of a compact manifold},
   journal={Nagoya Math. J.},
   volume={132},
   date={1993},
   pages={155--173},
   issn={0027-7630},
   review={\MR{1253700}},
   doi={10.1017/S0027763000004694},
}

\bib{Denis}{article}{
 author={Labutin D. A.}, 
 title={Thinness for Scalar-Negative Singular Yamabe Metrics},
journal={preprint, arxiv:math/0506226v1}
date={2005},
}

\bib{Landkof}{book}{
   author={Landkof, N. S.},
   title={Foundations of modern potential theory},
   note={Translated from the Russian by A. P. Doohovskoy;
   Die Grundlehren der mathematischen Wissenschaften, Band 180},
   publisher={Springer-Verlag, New York-Heidelberg},
   date={1972},
   pages={x+424},
   review={\MR{0350027}},
}

\bib{McOwen-92}{article}{
   author={Ma, Xiaoyun},
   author={McOwen, Robert C.},
   title={Complete conformal metrics with zero scalar curvature},
   journal={Proc. Amer. Math. Soc.},
   volume={115},
   date={1992},
   number={1},
   pages={69--77},
   issn={0002-9939},
   review={\MR{1101988}},
   doi={10.2307/2159566},
}

\bib{McOwen-open}{article}{
   author={McOwen, Robert C.},
   title={Results and open questions on the singular Yamabe problem},
   note={Dynamical systems and differential equations, Vol. II (Springfield,
   MO, 1996)},
   journal={Discrete Contin. Dynam. Systems},
   date={1998},
   number={Added Volume II},
   pages={123--132},
   issn={1078-0947},
   review={\MR{1721182}},
}

\bib{Reshetnyak}{article}{
   author={Re\v{s}etnjak, Ju. G.},
   title={The concept of capacity in the theory of functions with
   generalized derivatives},
   language={Russian},
   journal={Sibirsk. Mat. \v{Z}.},
   volume={10},
   date={1969},
   pages={1109--1138},
   issn={0037-4474},
   review={\MR{0276487}},
}

\bib{Sch-1}{article}{
   author={Schoen, Richard},
   title={Conformal deformation of a Riemannian metric to constant scalar
   curvature},
   journal={J. Differential Geom.},
   volume={20},
   date={1984},
   number={2},
   pages={479--495},
   issn={0022-040X},
   review={\MR{788292}},
}

\bib{Sch-2}{article}{
   author={Schoen, Richard},
   title={Recent progress in geometric partial differential equations},
   conference={
      title={Proceedings of the International Congress of Mathematicians,
      Vol. 1, 2},
      address={Berkeley, Calif.},
      date={1986},
   },
   book={
      publisher={Amer. Math. Soc., Providence, RI},
   },
   date={1987},
   pages={121--130},
   review={\MR{934219}},
}

\bib{SY-88}{article}{
   author={Schoen, R.},
   author={Yau, S.-T.},
   title={Conformally flat manifolds, Kleinian groups and scalar curvature},
   journal={Invent. Math.},
   volume={92},
   date={1988},
   number={1},
   pages={47--71},
   issn={0020-9910},
   review={\MR{931204}},
   doi={10.1007/BF01393992},
}

\bib{SchY}{book}{
   author={Schoen, R.},
   author={Yau, S.-T.},
   title={Lectures on differential geometry},
   series={Conference Proceedings and Lecture Notes in Geometry and
   Topology, I},
   publisher={International Press, Cambridge, MA},
   date={1994},
   pages={v+235},
   isbn={1-57146-012-8},
   review={\MR{1333601}},
}

\bib{Neil}{article}{
   author={Trudinger, Neil S.},
   title={On Harnack type inequalities and their application to quasilinear
   elliptic equations},
   journal={Comm. Pure Appl. Math.},
   volume={20},
   date={1967},
   pages={721--747},
   issn={0010-3640},
   review={\MR{0226198}},
   doi={10.1002/cpa.3160200406},
}

\bib{Yam}{article}{
   author={Yamabe, Hidehiko},
   title={On a deformation of Riemannian structures on compact manifolds},
   journal={Osaka Math. J.},
   volume={12},
   date={1960},
   pages={21--37},
   issn={0388-0699},
   review={\MR{125546}},
}
\end{biblist}
\end{bibdiv}
\end{document}